\newtheorem{theorem}{Theorem}[section]
\newtheorem{lemma}[theorem]{Lemma}
\newtheorem{definition}[theorem]{Definition}
\newcommand{\ra}{\rightarrow}
\def\RR{\mathbb R}
\newcommand{\al}{\alpha}
\begin{document}
\arraycolsep=1pt

\title{Weak factorization of the Hardy space $H^p$ for small values of $p$, in the multilinear setting}

\author{Marie-Jos\'{e} S. Kuffner}

\date{}
\maketitle

\begin{abstract}
We give a weak factorization proof of the Hardy space $H^{p}(\RR^{n})$ in the multilinear setting, for $\frac{n}{n+1} < p <1$. As a consequence, we obtain a characterization of the boundedness of the commutator $[b,T]$ from $L^{r_{1}}(\mathbb R^n) \text{~x ... x~} L^{r_{m}} (\mathbb R^n) \text{ to } L^{q^\prime} (\mathbb R^n)$, where $b \in \text{Lip}_\alpha (\mathbb R^n)$, and $\frac{\alpha}{n} = \sum_{i=1}^{m} \frac{1}{r_{i}} +\frac{1}{q} - 1$.
\end{abstract}

\bigskip
\bigskip

{ {\it Keywords}: Weak factorization; Hardy space; Lipschitz space; commutator.}

\medskip

\section{Introduction}
\label{sec:introduction}
\setcounter{equation}{0}

\indent
It is well-known that any function $f$ in the Hardy space of the disc $H^{r}(\mathbb{D})$ can be decomposed into a product of functions in $H^{p}(\mathbb{D})$ and $H^{q}(\mathbb{D})$, where $\frac{1}{r} = \frac{1}{p}+\frac{1}{q}$. When it comes to the real variable Hardy Spaces, things become different.\

In 1976, Coifman, Rochberg and Weiss \cite{CRW} presented a weak factorization result of the Hardy space $H^{1}(\mathbb{R}^{n})$ through commutators. Their proof was based upon the duality between $H^{1}$ and BMO, a result by Fefferman and Stein \cite{FS}, and the characterization of BMO in terms of the boundedness of the commutator of a Calder\'on-Zygmund operator with multiplication operator.\

In 1981, Uchiyama \cite{U} proved a weak factorization of the Hardy space $H^{p}$ in the space of homogeneous type, for $p\leq1$. His approach allows one to obtain a weak factorization result directly, without assuming any boundedness of the commutator. In fact, the boundedness of the commutator comes as a result of weak factorization. In 2016, Chaffee \cite{Chaffee} provided a proof of the boundedness of the commutator in the multilinear setting. As a consequence of this result, one gets a weak factorization of the Hardy space $H^{p}(\mathbb{R}^{n})$. In 2017, Li and Wick \cite{LW} adopted Uchiyama's method to show weak factorization of $H^{1}(\mathbb{R}^{n})$, in the multilinear setting.\

In this paper, we extend Uchiyama's method and provide a proof of the weak factorization of $H^{p}(\mathbb{R}^{n})$, in the multilinear setting, for $\frac{n}{n+1}<p<1.$ As an application, one obtains a characterization of the boundedness of the commutator $[b, T]$ from $L^{r_{1}}(\mathbb R^n) \times \cdots \times L^{r_{m}} (\mathbb R^n) \text{ to } L^{q^\prime} (\mathbb R^n)$, where $b \in \text{Lip}_\alpha (\mathbb R^n)$, and $\frac{\alpha}{n} = \frac{1}{p} - 1$.\\

We first introduce some definitions.
\begin{definition}
A bounded tempered distribution $f$ is in the Hardy space $H^p(\mathbb{R}^{n})$ if the Poisson maximal function
\[
M(f;P) = \sup_{t>0} |(P_{t} \ast f) (x)|
\]
lies in $L^{p}(\mathbb{R}^{n})$.
\end{definition}

\begin{definition}
A function $f \in \RR^{n}$ is Lipschitz continuous of order $\alpha>0$ if there is a constant $C<\infty$ such that for all $x,y \in \RR^{n}$, we have
\[
|f(x+y) - f(x)| \leq C|y|^{\alpha}.
\]
In this case, we write $f \in Lip_{\alpha}(\RR^{n})$.\\
\end{definition}
\noindent
Note that in \cite{DRS}, the authors show that the dual of $H^{p}(\RR^{n})$ is Lip$_{\alpha}(\RR^{n})$; a key fact that will be used later on in this paper. \\

 \noindent
 We now review the notion of multilinear Calder\'on-Zygmund theory studied in \cite{GT}.
 \begin{definition}
 Let $0<\epsilon, A < \infty$. A locally integrable function $K(y_{0},y_{1}, \cdots, y_{m})$ defined away from the diagonal $\{ y_{0}=y_{1} = \cdots = y_{m} \}$ in $(\mathbb{R}^{n})^{m+1}$ is said to be an $m$-linear Calder\'on-Zygmund kernel with constants $\epsilon, A$ if
 \begin{itemize}
 \item $K$ satisfies a size condition:
 \[
 |K(y_{0}, y_{1}, \cdots, y_{m})| \leq \frac{A}{(\sum_{k,l=0}^{m} |y_{k}-y_{l}|)^{mn}},
 \]
 and
 \item $K$ satisfies a smoothness condition:
 \[
 |K(y_{0}, \cdots, y_{j}, \cdots, y_{m}) - K(y_{0}, \cdots, y'_{j}, \cdots, y_{m})| \leq \frac{A|y_{j}-y_{j'}|^{\epsilon}}{(\sum_{k,l=0}^{m} |y_{k}-y_{l}|)^{mn+\epsilon}},
 \]
 whenever $0\leq j \leq m$ and $ |y_{j}-y_{j'}| \leq \frac{1}{2} \max_{0\leq k\leq m}|y_{j}-y_{k}|$.
 \end{itemize}
\end{definition}

\begin{definition}
Let $0<\epsilon, A < \infty$. An $m$-linear operator $T: L^{r_{1}}(\mathbb R^n) \times \cdots \times L^{r_{m}} (\mathbb R^n) \text{ to } L^{p} (\mathbb R^n)$ is said to be a Calder\'on-Zygmund operator if $T$ is associated with the $m$-linear Calder\'on-Zygmund kernel $K$, i.e.,
\[
T(f_{1}, \cdots, f_{m})(x) = \int_{\mathbb{R}^{mn}} K(x,y_{1}, \cdots, y_{m}) \Pi_{j=1}^{m} f_{j}(y_{j}) dy_{1}\cdots dy_{m},
\]
for all $x \notin \cap_{j=1}^{m} \text{ supp}(f_{j})$, where $f_{1}, \cdots, f_{m}$ are $m$ functions on $\mathbb{R}^{n}$ with $\cap_{j=1}^{m} \text{ supp}(f_{j}) \neq \phi$,
and 
\[
\frac{1}{p} = \sum_{i=1}^{m} \frac{1}{r_{i}}.
\]
The $lth$ partial adjoint of $T$ is 
\[
T_{l}^{*} (f_{1}, \cdots, f_{m})(x) = \int_{\mathbb{R}^{mn}} K(y_{l}, y_{1}, \cdots, y_{l-1}, x, y_{l+1}, \cdots, y_{m}) \Pi_{j=1}^{m} f_{j}(y_{j}) dy_{1} \cdots dy_{m}.
\]
\end{definition}

Note that Calder\'on-Zygmund operators are originally defined on Schwartz function spaces $\mathcal{S}(\mathbb{R}^{n})$. In \cite{GT}, the authors show that a Calder\'on-Zygmund operator $T$ indeed extends to a bounded operator from $L^{r_{1}}(\mathbb R^n) \text{~x ... x~} L^{r_{m}} (\mathbb R^n)$ to $L^{p}(\mathbb{R}^{n})$, provided $\frac{1}{p}= \sum_{i=1}^{m} \frac{1}{r_{i}}$.

\begin{definition}
We say that a Calder\'on-Zygmund operator $T$ is $mn$-homogeneous if the kernel $K$ of $T$ satisfies
\[
|K(x_{0}, \cdots, x_{m})| \geq \frac{C}{N^{mn}},
\]
for $m+1$ pairwise disjoint balls $B_{0}(x_{0},r), \cdots, B_{m}(x_{m},r)$ satisfying $|x_{0}-x_{l}| \simeq Nr$ for all $x_{l}, l=1, \cdots, m$, where $r>0$ and $N$ a large number.
\end{definition}

\section{Statement of Main Results}
\begin{theorem}\label{Thm1Multi}
Let $T$ be an $m$-linear Calder\'on-Zygmund operator that is $mn$-homogeneous in the lth component, with $\frac{n}{n+1} < \epsilon < 1$, where $1\leq l \leq m$. Then, for every $f \in H^{p}(\RR^{n})$, there exist sequences $ \{ \lambda_{j}^{k} \} \subseteq l^{p}, \{ g_{j}^{k}\} \subseteq L^{q}(\RR^{n}), \{h_{j,1}^{k}\} \subseteq L^{r_{1}}(\RR^{n}), \cdots, \{h_{j,m}^{k}\} \subseteq L^{r_{m}}(\RR^{n})$, with $\frac{1}{q} + \frac{1}{r_{1}} + \cdots + \frac{1}{r_{m}} = \frac{1}{p}$, such that
\begin{align}\label{factorization}
f=\sum_{k=1}^{\infty} \sum_{j=1}^{\infty} \lambda_{j}^{k} \Pi_{l}(g_{j}^{k}, h_{j,1}^{k}, \cdots, h_{j,m}^{k}) \text{ in } H^{p}(\RR^{n}),
\end{align}
where $$\Pi_{l}(g_{j}^{k}, h_{j,1}^{k}, \cdots, h_{j,m}^{k}) = h_{j,l}^{k} T_{l}^{*} (h_{j,1}^{k}, \cdots, h_{j,l-1}^{k},g_{j}^{k},h_{j,l+1}^{k}, \cdots, h_{j,m}^{k}) - g_{j}^{k} T(h_{j,1}^{k}, \cdots,  h_{j,m}^{k}).$$
Moreover, we have
\[
\|f\|_{H^{p}(\RR^{n})} \approx \inf \big( \{ \sum_{k=1}^{\infty} \sum_{j=1}^{\infty} |\lambda_{j}^{k}|^{p} \|g_{j}^{k}\|_{L^{q}(\RR^{n})} \|h_{j,1}^{k}\|_{L^{r_{1}}(\RR^{n})} \cdots \|h_{j,m}^{k}\|_{L^{r_{m}}(\RR^{n})}\} \big)^{1/p},\]
 where the infimum is taken over all possible representations of $f$ that satisfy  \eqref{factorization}.

\end{theorem}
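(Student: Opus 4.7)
The plan is to follow Uchiyama's duality approach, adapted to the small-$p$ range where $(H^{p})^{*}=\text{Lip}_{\alpha}$ with $\alpha=n(\tfrac{1}{p}-1)\in(0,1)$. Write $\mathbb{H}^{p}$ for the closure in $H^{p}(\RR^{n})$ of finite linear combinations of the building blocks $\Pi_{l}(g,h_{1},\dots,h_{m})$, with $g\in L^{q}$ and $h_{j}\in L^{r_{j}}$ as in the statement. The theorem amounts to $\mathbb{H}^{p}=H^{p}$ together with the norm equivalence, and I would prove this in three stages: (i) a continuous inclusion $\mathbb{H}^{p}\hookrightarrow H^{p}$ with a quantitative estimate on each $\Pi_{l}$; (ii) a Hahn-Banach argument reducing density to the vanishing of a multilinear commutator; (iii) a geometric argument, using the $mn$-homogeneity of $T$ in the $l$-th slot, to deduce from that vanishing that the dual element $b$ must be constant.

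For (i), the goal is $\|\Pi_{l}(g,h_{1},\dots,h_{m})\|_{H^{p}}\lesssim \|g\|_{L^{q}}\prod_{j}\|h_{j}\|_{L^{r_{j}}}$. Because $\alpha<1$, an $H^{p}$-atom requires only mean-value cancellation, and the pairing identity $\langle 1,\Pi_{l}(g,h_{1},\dots,h_{m})\rangle=\int g(x)\,[1,T]_{l}(h_{1},\dots,h_{m})(x)\,dx=0$ delivers this cancellation automatically. After localizing each of $g,h_{1},\dots,h_{m}$ to a single cube via atomic decomposition and invoking the quasi-subadditivity $\|f+g\|_{H^{p}}^{p}\leq \|f\|_{H^{p}}^{p}+\|g\|_{H^{p}}^{p}$, the main tail estimate reduces to showing that the off-diagonal part of $\Pi_{l}$ decays like $|x|^{-n-\epsilon}$; this is $L^{p}$-integrable precisely when $\epsilon>\alpha$, which is exactly the role of the assumption $\tfrac{n}{n+1}<\epsilon<1$ in the relevant range of $p$.

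For (ii), assume for contradiction that $\mathbb{H}^{p}\subsetneq H^{p}$. Hahn-Banach produces a nonzero bounded functional on $H^{p}$ annihilating $\mathbb{H}^{p}$, and by the duality of \cite{DRS} this functional is represented by a nonconstant $b\in \text{Lip}_{\alpha}(\RR^{n})$. Using the adjoint relation $\langle T(h_{1},\dots,h_{m}),\phi\rangle=\langle T_{l}^{*}(h_{1},\dots,\phi,\dots,h_{m}),h_{l}\rangle$ together with the definition of $\Pi_{l}$, the vanishing $\langle b,\Pi_{l}(g,h_{1},\dots,h_{m})\rangle=0$ rearranges to
\[
\int_{\RR^{n}}g(x)\bigl[T(h_{1},\dots,bh_{l},\dots,h_{m})-bT(h_{1},\dots,h_{m})\bigr](x)\,dx=0
\]
for every admissible $g$ and $h_{j}$. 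Since $g$ ranges over a dense subset of $L^{q}$, the commutator $[b,T]_{l}(h_{1},\dots,h_{m})=T(h_{1},\dots,bh_{l},\dots,h_{m})-bT(h_{1},\dots,h_{m})$ vanishes identically for every tuple $h_{j}\in L^{r_{j}}$.

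The main obstacle is stage (iii): concluding from $[b,T]_{l}\equiv 0$ that $b$ is constant, which is exactly where the hypothesis of $mn$-homogeneity in the $l$-th slot enters, along lines parallel to \cite{Chaffee}. If $b$ were nonconstant in $\text{Lip}_{\alpha}$, one may pick $x_{0},x_{l}$ with $|b(x_{0})-b(x_{l})|\gtrsim|x_{0}-x_{l}|^{\alpha}$, fix a small radius $r$ and set $N=|x_{0}-x_{l}|/r$, and then choose bumps $h_{j}$ concentrated on disjoint balls $B(x_{j},r)$ arranged so that the homogeneity lower bound $|K|\gtrsim N^{-mn}$ of Definition 1.5 applies. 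Then $|T(h_{1},\dots,h_{m})(x)|\gtrsim N^{-mn}\prod_{j}\|h_{j}\|_{L^{1}}$ on $B(x_{0},r)$, while the leading contribution to $[b,T]_{l}(h_{1},\dots,h_{m})$ on that ball is $(b(y_{l})-b(x))T(h_{1},\dots,h_{m})(x)$ with $y_{l}\in B(x_{l},r)$, comparable in absolute value to $|x_{0}-x_{l}|^{\alpha}N^{-mn}\prod_{j}\|h_{j}\|_{L^{1}}$; the Lipschitz oscillation of $b$ on each small ball $B(x_{j},r)$ contributes only $O(r^{\alpha})$ lower-order terms, which are negligible for $r\ll|x_{0}-x_{l}|$. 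This gives a strictly positive lower bound on $\|[b,T]_{l}(h_{1},\dots,h_{m})\|_{L^{\infty}(B(x_{0},r))}$, contradicting identical vanishing. Hence $b$ is constant, the annihilating functional is zero, and $\mathbb{H}^{p}=H^{p}$. The norm equivalence then follows by combining the estimate of stage (i) with the open mapping theorem applied to the quasi-Banach surjection from the $\ell^{p}$-weighted direct sum of products of $L^{q}$ and $L^{r_{j}}$ onto $H^{p}$.
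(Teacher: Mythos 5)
Your proposal is a genuinely different route from the paper's: you argue by duality and Hahn--Banach separation (in the spirit of Coifman--Rochberg--Weiss and of Chaffee's lower bound for the commutator), whereas the paper follows Uchiyama's constructive algorithm. Unfortunately, your stage (ii) contains a fatal gap. For $\frac{n}{n+1}<p<1$ the space $H^{p}(\RR^{n})$ is a non-locally-convex quasi-Banach space, and the Hahn--Banach separation theorem is not valid there: the fact that $(H^{p})^{*}=\mathrm{Lip}_{\alpha}$ separates points does \emph{not} imply that a proper closed subspace $\mathbb{H}^{p}\subsetneq H^{p}$ admits a nonzero continuous annihilating functional. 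Indeed, proper closed subspaces of non-locally-convex spaces can be weakly dense, and the failure of Hahn--Banach extension/separation in $H^{p}$, $p<1$, goes back to Duren--Romberg--Shields. At best your argument would show that the span of the $\Pi_{l}$ blocks is dense in the \emph{Banach envelope} of $H^{p}$, which is strictly weaker than density in the $H^{p}$ quasi-norm and does not yield the representation \eqref{factorization} with $\ell^{p}$ control. This obstruction is precisely why the paper (following Uchiyama and Li--Wick) avoids duality altogether: Theorem \ref{atomic approximation} shows that every $H^{p}$-atom $a$ can be approximated by a \emph{single} block $\Pi_{l}(g,h_{1},\dots,h_{m})$ to within $\varepsilon$ in $H^{p}$, with $\|g\|_{L^{q}}\prod_{j}\|h_{j}\|_{L^{r_{j}}}\lesssim N^{mn}$, and the theorem then follows from the atomic decomposition (Theorem \ref{atomdecomp}) by iterating on the error term $E_{M}$, whose norm decays like $(\varepsilon C)^{M}$; the norm equivalence comes out of the same geometric series together with the $\mathrm{Lip}_{\alpha}$ upper bound of \cite{LMW}, with no appeal to an open mapping theorem.

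Two smaller remarks. First, your stage (iii) is essentially sound as a proof that $[b,T]_{l}\equiv 0$ forces $b$ to be constant, but it needs the additional observation that the smoothness estimate on $K$ combined with the homogeneity lower bound forces $K$ to have a fixed sign on the product of the chosen balls for $N$ large (otherwise the integral defining $T(h_{1},\dots,h_{m})$ could cancel), and that the pairing $\langle b,\Pi_{l}(g,\vec h)\rangle=\langle g,[b,T]_{l}(\vec h)\rangle$ must be justified for unbounded $b\in\mathrm{Lip}_{\alpha}$ by restricting to compactly supported data. Second, were the separation step available, your concluding appeal to the open mapping theorem would be legitimate (Baire category does not require local convexity), but it cannot rescue stage (ii). To repair the proof you would need to replace the Hahn--Banach reduction by a quantitative approximation of atoms, which is exactly the content of Theorem \ref{atomic approximation}.
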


As a consequence of this Theorem, we obtain a new characterization of $\text{Lip}_\alpha (\mathbb R^n)$ in terms of the boundedness of the commutators with the multilinear Riesz transforms, where the commutator operator (of some function $b$ against an operator $T$) is defined by 
\[
[b,T]_{l}(f_{1}, \cdots, f_{m}) = T(f_{1}, \cdots, bf_{l}, \cdots, f_{m}) - bT(f_{1}, \cdots, f_{m}).
\]
Notice that the multiplication operator $\Pi_{l}$ defined in Theorem \ref{Thm1Multi} is the dual of the commutator operator $[b,T]_{l}$.

\begin{theorem}\label{Thm2Multi}
Let $T$ be an $m$-linear Calder\'on-Zygmund operator, that is $mn$-homogeneous in the lth component, with $\frac{n}{n+1} < \epsilon < 1$, $b \in L^{1}_{loc}(\RR^{n})$ and $\alpha>0$ such that $\frac{\alpha}{n} = \frac{1}{p} - 1$. Suppose that $b \in Lip_{\alpha}(\RR^{n})$, then
\[
\|b\|_{Lip_{\alpha}(\RR^{n})} \approx \| [b,T]_{l} \|_{L^{r_{1}}(\RR^{n}) \times \cdots \times L^{r_{m}}(\RR^{n}) \ra L^{q^{\prime}}(\RR^{n})},
\]
where $\frac{1}{q} + \frac{1}{r_{1}} + \cdots + \frac{1}{r_{m}} = \frac{1}{p}$ and $q'$ is the dual exponent of $q$.

\end{theorem}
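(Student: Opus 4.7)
The plan is to prove the two-sided estimate in two halves. The direction $\|[b,T]_l\|_{L^{r_1}\times\cdots\times L^{r_m}\to L^{q'}} \lesssim \|b\|_{\text{Lip}_\alpha(\RR^n)}$ is the multilinear commutator bound for a Lipschitz symbol; in exactly the form required it was established by Chaffee~\cite{Chaffee}, so I would simply quote it. The genuinely new direction is the reverse inequality, and it follows by combining Theorem~\ref{Thm1Multi} with the $H^p$--Lip$_\alpha$ duality from~\cite{DRS}, which is valid precisely for the exponent relation $\frac{\alpha}{n}=\frac{1}{p}-1$ assumed in the statement.

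The algebraic key is that $\Pi_l$ is the formal predual of the commutator: unravelling the definition of the partial adjoint $T_l^*$ gives
\begin{equation*}
\int_{\RR^n} b(x)\,\Pi_l(g,h_1,\ldots,h_m)(x)\,dx \;=\; \int_{\RR^n} [b,T]_l(h_1,\ldots,h_m)(x)\,g(x)\,dx
\end{equation*}
for any admissible $g,h_1,\ldots,h_m$. Given $f \in H^p(\RR^n)$, I apply Theorem~\ref{Thm1Multi} to write $f = \sum_{k,j}\lambda_j^k\,\Pi_l(g_j^k,h_{j,1}^k,\ldots,h_{j,m}^k)$ and, by homogeneity, rescale so that $\|g_j^k\|_{L^q} = \|h_{j,i}^k\|_{L^{r_i}} = 1$, whence $\|f\|_{H^p}^p \approx \sum_{k,j}|\lambda_j^k|^p$. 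Pairing $b$ with $f$ termwise, using the identity above, H\"older's inequality, the quoted boundedness of $[b,T]_l$, and the embedding $\ell^p \hookrightarrow \ell^1$ (which holds with constant $1$ for $p\leq 1$), I obtain
\begin{equation*}
|\langle b,f\rangle| \;\leq\; \sum_{k,j}|\lambda_j^k|\,\bigl\|[b,T]_l\bigr\| \;\leq\; \bigl\|[b,T]_l\bigr\|\Bigl(\sum_{k,j}|\lambda_j^k|^p\Bigr)^{\!1/p} \;\lesssim\; \bigl\|[b,T]_l\bigr\|\,\|f\|_{H^p}.
\end{equation*}
Taking the supremum over $f$ with $\|f\|_{H^p}\leq 1$ and invoking $(H^p)^* = \text{Lip}_\alpha$ yields $\|b\|_{\text{Lip}_\alpha} \lesssim \|[b,T]_l\|$.

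The main technical point I anticipate is justifying the termwise pairing $\langle b,f\rangle = \sum_{k,j}\lambda_j^k\int b\,\Pi_l$. The $H^p$-convergence of the weak factorization in Theorem~\ref{Thm1Multi} lets one interchange sum and pairing by continuity of the $H^p$--Lip$_\alpha$ duality, while the absolute summability needed before inserting the H\"older bound term-by-term is precisely what the $\ell^p \hookrightarrow \ell^1$ step delivers. The cleanest organisation is to first run the argument on a dense subclass of $H^p$ (for instance finite sums of $H^p$-molecules, or Schwartz functions lying in $H^p$) where the pairing with $b$ is unambiguous, and then extend the resulting norm inequality by continuity.
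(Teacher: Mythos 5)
Your proposal is correct and follows essentially the same route as the paper: the upper bound is quoted from the literature (the paper cites \cite{LMW} rather than \cite{Chaffee}, but either serves), and the lower bound combines the weak factorization of Theorem \ref{Thm1Multi} with the duality identity $\langle b,\Pi_l(g,h_1,\ldots,h_m)\rangle=\langle g,[b,T]_l(h_1,\ldots,h_m)\rangle$, H\"older's inequality, the $\ell^p\hookrightarrow\ell^1$ embedding, and the $H^p$--$\mathrm{Lip}_\alpha$ duality. Your normalization of the factors and your remarks on justifying the termwise pairing are sensible refinements of details the paper passes over silently, but they do not change the argument.
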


\section{Preliminaries}
\setcounter{equation}{0}
\label{sec:preliminaries}

In this section, we recall some definitions and theorems we need in order to prove our main results.

\begin{definition}\label{def of atom}
A function $a$ is called an $L^\infty$-atom for $H^p(\mathbb{R}^{n})$ (or simply an $H^{p}(\RR^{n})$-atom) if there exists a ball $B \subset \mathbb{R}^{n}$ such that supp ${a} \subset B$, $\|{a}\|_{\infty} \leq |B|^{-1/p}$ and $\int_{0}^{\infty} x^\gamma a(x) dx = 0$ for all multi-indices $\gamma$ with $|\gamma| \leq [\frac{n}{p} - {n}]$.
\end{definition}

\noindent
Notice that since $p>\frac{n}{n+1}$ in our case, then we have $\gamma = 0$.
\\

\begin{lemma}
Let $f$ be a function on $\mathbb{R}^{n}$ satisfying:
\begin{itemize}
\item $\int_{\mathbb{R}^{n}} f(x)dx = 0$\\
\item there exist $y_{1}, y_{2} \in \RR^{n}, r\in\RR$ and $N$ large such that
\[
|f(x)| \lesssim C_{1}\chi_{B(y_{1},r)}(x) + C_{2}\chi_{B(y_{2},r)}(x) \text{ for } |y_{1} - y_{2}|=Nr.
\]
\end{itemize}
Then,
\[
 f\in H^{p}(\RR^{n}) \text{ and } \|f\|_{H^{p}(\RR^{n})} \lesssim N^{n(1-p)} \log_{2} N (C_{1} |B(y_{1},r)| + C_{2} |B(y_{2}, r)|)
\]
\end{lemma}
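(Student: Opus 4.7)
The plan is to construct an explicit atomic decomposition of $f$ whose weights reproduce the displayed bound. Since $p>\f{n}{n+1}$ only the zeroth moment needs to vanish in Definition~\ref{def of atom}, and $\|\cdot\|_{H^p}^p$ is subadditive on atomic sums, so it suffices to write $f$ as a sum of $L^{\infty}$-atoms with controlled weights. First I would split $f=f_1+f_2$ with $f_i:=f\chi_{B(y_i,r)}$ and use $\int f=0$ to obtain $\int f_1=-\int f_2=:a$, with $|a|\le\min(C_1|B(y_1,r)|,\,C_2|B(y_2,r)|)$. Subtracting the average on each ball produces
\[
a_i := f_i-\frac{(-1)^{i-1}\,a}{|B(y_i,r)|}\,\chi_{B(y_i,r)},
\]
which is mean-zero, supported in $B(y_i,r)$, of sup-norm $\lesssim C_i$, so it is a constant multiple of an $H^p$-atom contributing $\lesssim C_i^p|B(y_i,r)|$ to $\|f\|_{H^p}^p$. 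The leftover is the \emph{dipole}
\[
D := \frac{a}{|B(y_1,r)|}\chi_{B(y_1,r)}-\frac{a}{|B(y_2,r)|}\chi_{B(y_2,r)},
\]
whose vanishing integral is not accompanied by compact support: this is the source of all the work.

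To bridge $D$ I would telescope through a dyadic chain of concentric balls. Fix $K\asymp\log_2 N$ so that $\tilde B_K^{(i)}:=B(y_i,2^K r)$ contains both base balls, set $\tilde B_k^{(i)}:=B(y_i,2^k r)$ for $0\le k\le K$, and write
\[
\frac{\chi_{B(y_1,r)}}{|B(y_1,r)|}-\frac{\chi_{\tilde B_K^{(1)}}}{|\tilde B_K^{(1)}|}=\sum_{k=0}^{K-1}\left(\frac{\chi_{\tilde B_k^{(1)}}}{|\tilde B_k^{(1)}|}-\frac{\chi_{\tilde B_{k+1}^{(1)}}}{|\tilde B_{k+1}^{(1)}|}\right),
\]
with a symmetric identity at $y_2$; the two leftover pieces $\chi_{\tilde B_K^{(i)}}/|\tilde B_K^{(i)}|$ then combine into a single zero-mean atom on a ball of radius $\sim Nr$. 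Each telescope summand has zero integral, is supported in $\tilde B_{k+1}^{(i)}$, and has $L^\infty$-norm $\lesssim|\tilde B_k^{(i)}|^{-1}$, so after scaling by the coefficient $a$ it is $\lambda_k$ times an $H^p$-atom with $\lambda_k^p\asymp|a|^p\,2^{nk(1-p)}\,|B(y_i,r)|^{1-p}$.

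The decisive computation is $\sum_{k=0}^{K-1}2^{nk(1-p)}\asymp 2^{nK(1-p)}\asymp N^{n(1-p)}$, a geometric bound available precisely because $p<1$ forces $1-p>0$. Substituting $|a|^p\le C_i^p|B(y_i,r)|^p$ then gives a contribution of the required size $N^{n(1-p)}\bigl(C_1|B(y_1,r)|+C_2|B(y_2,r)|\bigr)$, and one checks that the single linking atom at scale $Nr$ does not dominate. The $\log_2 N$ factor appears when one controls the sum over the $K\asymp\log_2 N$ telescope terms by the ordinary triangle rather than by the sharper $p$-triangle inequality. The main obstacle is balancing the competing exponents $1-p$ and $1/p-1$ along the chain: $p<1$ must be invoked at exactly the step where the dyadic sum is collected, for otherwise the bound would grow polynomially in $K$ and the dipole estimate would be lost.
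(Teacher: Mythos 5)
Your proposal is correct and follows essentially the same route as the paper: split $f$ into its pieces on $B(y_1,r)$ and $B(y_2,r)$, telescope each piece's mean outward through $O(\log_2 N)$ dyadic dilates, and close the chain with one mean-zero atom at scale $\sim Nr$ using $\int f=0$, with the geometric sum $\sum_k 2^{nk(1-p)}\asymp N^{n(1-p)}$ doing the work. The only differences are cosmetic (you peel off the local mean-zero atoms and the ``dipole'' explicitly, whereas the paper absorbs the local part into the first telescope term), and your observation that the $\log_2 N$ factor is slack rather than essential is consistent with the paper's crude term-count bound.
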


\begin{proof}
We will proceed with the proof of this lemma using the atomic decomposition characterization of $H^{p}({\RR^{n}})$. Since $|f(x)| \lesssim C_{1}\chi_{B(y_{1},r)}(x) + C_{2}\chi_{B(y_{2},r)}(x)$, we write
\[
f=f_{1}+f_{2} \text{ where supp } f_{i} \subseteq B(y_{i},r) \text{ for } i=1,2.
\]
Let $J_{0}$ be the smallest integer larger than $\log_{2} \frac{|y_{1}-y_{2}|}{r}$ and for $k=1, \cdots, J_{0}$, let
\[
\alpha_{i}^{k} = \frac{|B(y_{i},r)|}{|B(y_{i},2^{k}r)|} \langle f_{i} \rangle_{B(y_{i},r)},
\]
and
\[
f_{i}^{k} = \alpha_{i}^{k-1} \chi_{B(y_{i},2^{k-1}r)} - \alpha_{i}^{k} \chi_{B(y_{i},2^{k}r)},
\]
where $\alpha_{i}^{0} = f_{i}$ and  $\langle f_{i} \rangle_{B(y_{i},r)} = \frac{1}{|B(y_{i},r)|} \int_{B(y_{i},r)} f_{i}(x) dx$, the average of $f_{i}$ on $B(y_{i},r)$. 
Then,
\begin{align*}
f &= f_{1}+f_{2}\\
&=f_{1} + f_{2} - \sum_{i=1}^{2} \al_{i}^{1} \chi_{B(y_{i},2r)} + \sum_{i=1}^{2} \al_{i}^{1} \chi_{B(y_{i},2r)}\\
&= \sum_{i=1}^{2} (f_{i} -  \al_{i}^{1} \chi_{B(y_{i},2r)}) + \sum_{i=1}^{2} \al_{i}^{1} \chi_{B(y_{i},2r)}\\
&= \sum_{i=1}^{2} f_{i}^{1} + \sum_{i=1}^{2} \al_{i}^{1} \chi_{B(y_{i},2r)}\\
&= \sum_{i=1}^{2} f_{i}^{1} + \sum_{i=1}^{2} \al_{i}^{1} \chi_{B(y_{i},2r)} -\al_{i}^{2} \chi_{B(y_{i},2^{2}r)} + \al_{i}^{2} \chi_{B(y_{i},2^{2}r)} \\
&= \sum_{i=1}^{2} f_{i}^{1} + \sum_{i=1}^{2} f_{i}^{2} + \sum_{i=1}^{2} \alpha_{i}^{2} \chi_{B(y_{i},2^{2}r)} - \alpha_{i}^{3} \chi_{B(y_{i},2^{3}r)} + \alpha_{i}^{3} \chi_{B(y_{i},2^{3}r)}\\
&= \sum_{i=1}^{2}\big(\sum_{k=1}^{J_{0}} f_{i}^{k} \big) + \sum_{i=1}^{2} \alpha_{i}^{J_{0}} \chi_{B(y_{i},2^{J_{0}}r)}.\\
\end{align*}
Now for $k=1,\cdots,J_{0}$, let 
\[
a_{i}^{k} = \frac{f_{i}^{k}}{\|f_{i}^{k}\|_{L^{\infty}}} |B(y_{i},2^{k}r)|^{-1/p}.
\] 
Then, $\text{supp } a_{i}^{k} \subseteq B(y_{i}, 2^{k}r), \|a_{i}^{k}\|_{\infty} = |B(y_{i}, 2^{k}r)|^{-1/p}$, and,
\begin{align*}
\int a_{i}^{k}(x) dx&= \frac{1}{\|f_{i}\|_{L^{\infty}}} |B(y_{i}, 2^{k}r)|^{-1/p} \int f_{i}^{k}(x) dx\\
&= \frac{|B(y_{i}, 2^{k}r)|^{-1/p}}{\|f_{i}\|_{L^{\infty}}} |B(y_{i}, 2^{k}r)|^{-1/p} \int \alpha_{i}^{k-1} \chi_{B(y_{i},2^{k-1}r)} - \alpha_{i}^{k} \chi_{B(y_{i},2^{k}r)} dx\\
&= \frac{|B(y_{i}, 2^{k}r)|^{-1/p}}{\|f_{i}\|_{L^{\infty}}}  \Big( \int_{B(y_{i},2^{k-1}r)} \frac{|B(y_{i},r)|}{|B(y_{i},2^{k-1}r)|} \langle f_{i} \rangle_{B(y_{i},r)} - \int_{B(y_{i},2^{k}r)} \frac{|B(y_{i},r)|}{|B(y_{i},2^{k}r)|} \langle f_{i} \rangle_{B(y_{i},r)}\Big)\\
&= 0.\\
\end{align*}
Thus, $a_{i}^{k}$ is a $p$-atom, and
\[
f=\sum_{i=1}^{2}\big( \sum_{k=1}^{J_{0}} \|f_{i}^{k}\|_{L^{\infty}} |B(y_{i},2^{k}r)|^{1/p} a_{i}^{k} \big) + \sum_{i=1}^{2} \alpha_{i}^{J_{0}} \chi_{B(y_{i},2^{J_{0}}r)}.
\]
It remains to estimate
\[
\sum_{i=1}^{2} \alpha_{i}^{J_{0}} \chi_{B(y_{i},2^{J_{0}}r)}.
\]
To do that, let
\[
\alpha^{J_{0}} = \frac{|B(y_{1},r)|}{|B(\frac{y_{1}+y_{2}}{2}, 2^{J_{0}}r)|} \langle f_{1} \rangle_{B(y_{1},r)}.
\]
Notice that, since $f= f_{1}+f_{2}$ and $\int f(x) dx =0$, we also have
\[
\alpha^{J_{0}} = - \frac{|B(y_{2},r)|}{|B(\frac{y_{1}+y_{2}}{2},2^{J_{0}}r)|} \langle f_{2} \rangle_{B(y_{2},r)}.
\]
Let
\[
f_{i}^{J_{0}+1} = \alpha_{i}^{J_{0}} \chi_{B(y_{i}, 2^{J_{0}}r)} + (-1)^{i} \alpha^{J_{0}} \chi_{B(\frac{y_{1}+y_{2}}{2}, 2^{J_{0}+1}r)},
\]
then,
\begin{align*}
\sum_{i=1}^{2} \alpha_{i}^{J_{0}} \chi_{B(y_{i},2^{J_{0}}r)} &= \alpha_{1}^{J_{0}} \chi_{B(y_{i},2^{J_{0}}r)} + \alpha_{2}^{J_{0}} \chi_{B(y_{i}, 2^{J_{0}}r)} -\alpha^{J_{0}} \chi_{B(\frac{y_{1}+y_{2}}{2}, 2^{J_{0}+1})} + \alpha^{J_{0}} \chi_{B(\frac{y_{1}+y_{2}}{2}, 2^{J_{0}+1})}\\
&= f_{1}^{J_{0}+1} + f_{2}^{J_{0}+1}\\
&= \sum_{i=1}^{2} f_{i}^{J_{0}+1}.
\end{align*}
Let
\[
a_{i}^{J_{0}+1} = \frac{f_{i}^{J_{0}+1}}{\|f_{i}^{J_{0}+1}\|_{L^{\infty}}} |B(\frac{y_{1}+y_{2}}{2}, 2^{J_{0}+1}r)|^{-1/p},
\]
then it is easy to see that $a_{i}^{J_{0}+1}$ is a $p$-atom, (the only tricky part would be to show $a_{i}^{J_{0}+1}$ has mean value zero but that is implied by the above remark on $\alpha^{J_{0}}$), and
\[
f= \sum_{i=1}^{2} \sum_{k=1}^{J_{0}+1} \gamma_{i}^{k} a_{i}^{k},
\]
where 
\[
\gamma_{i}^{k}= 
\begin{cases}
    \|f_{i}^{k}\|_{L^{\infty}} |B(y_{i},2^{k}r)|^{1/p}& \text{for } k=1,\cdots,J_{0}\\
    \|f_{i}^{k}\|_{L^{\infty}} |B(\frac{y_{1}+y_{2}}{2}, 2^{J_{0}+1}r)|^{1/p} & \text{for } k=J_{0}+1.
\end{cases}
\]
Note that, for $k=1,\cdots, J_{0}+1$, by doubling condition, we have
\begin{align*}
|\gamma_{i}^{k}| &= \|f_{i}^{k}\|_{L^{\infty}} |B(y_{i},2^{k}r)|^{1/p}\\
&\leq \frac{|B(y_{i},r)|}{|B(y_{i},2^{k-1}r)|} \|f_{i}\|_{L^{\infty}} |B(y_{i},2^{k}r)|^{1/p}\\
&\lesssim C_{i} (2^{(k-1)})^{n} (2^{kn} |B(y_{i},r)|)^{1/p}\\
&\lesssim C_{i} 2^{kn(\frac{1}{p} -1)} |B(y_{i},r)|^{1/p}.\\
\end{align*}
Thus, $f \in H^{p}(\RR^{n})$, and
\begin{align*}
\|f\|_{H^{p}(\RR^{n})}^{p} &\leq \sum_{i=1}^{2} \sum_{k=1}^{J_{0}+1} |\gamma_{i}^{k}|^{p}\\
&\lesssim \sum_{i=1}^{2} \sum_{k=1}^{{J_0}+1} C_{i} 2^{kn(1 -p)} |B(y_{i},r)|\\
&\leq \sum_{i=1}^{2} C_{i} |B(y_{i},r)| (J_{0}+1) 2^{(J_{0}+1) n (1-p)}\\
&\leq (\sum_{i=1}^{2} C_{i} |B(y_{i},r)|)  N^{n(1-p)} \log_{2} N.
\end{align*}
\end{proof}

Next, we will recall the notion of atomic decomposition. Any $H^{p}(\RR^{n})$ function can be decomposed into an infinite sum of $H^{p}(\RR^{n})$-atoms. We refer the reader to [Theorem 2.3.12, \cite{Gr}] for the proof of the theorem.

\begin{theorem}\label{atomdecomp}
Given a distribution $f \in H^{p}(\RR^{n})$, there exists $\{a_{j}\}_{j=1}^{\infty}$, a sequence of $H^{p}(\RR^{n})$-atoms, and $\{\lambda_{j}\}_{j=1}^{\infty}$ such that
\[
f=\sum_{j=1}^{\infty} \lambda_{j}a_{j} \text{ in } H^{p}(\RR^{n}).
\]
Moreover, we have
\[
\| f \|_{H^{p}(\RR^{n})} \approx \inf \big\{ (\sum_{j=1}^{\infty} |\lambda_{j}|^{p})^{1/p} : f=\sum_{j=1}^{\infty} \lambda_{j}a_{j} \big\}.
\]
\end{theorem}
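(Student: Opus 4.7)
The plan is to prove the atomic decomposition by a Calder\'on--Zygmund type stopping-time argument at dyadic levels of the grand maximal function. Using the equivalence between the Poisson maximal function characterization of Definition~1.1 and a grand maximal function $\MM(f)$ (standard when $p\le 1$), I would fix $f \in H^p(\RR^n)$ and, for each integer $k$, define the open level sets $\Omega_k = \{x \in \RR^n : \MM(f)(x) > 2^k\}$. A Whitney decomposition yields cubes $\{Q_{k,j}\}_j$ with bounded overlap and sidelength comparable to $\text{dist}(Q_{k,j}, \Omega_k^c)$, and I would introduce a smooth partition of unity $\{\varphi_{k,j}\}$ subordinate to mild enlargements of these cubes.

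Next I would define the bad pieces
\[
b_{k,j} = (f - c_{k,j})\,\varphi_{k,j},
\]
where $c_{k,j}$ is a polynomial chosen to cancel moments of order at most $[n/p - n]$. Since $n/(n+1) < p < 1$ forces $[n/p - n] = 0$, only the mean condition is required and $c_{k,j}$ is simply a weighted average. Setting $g_k = f - \sum_j b_{k,j}$, the telescoping identity
\[
f = g_{k_0} + \sum_{k \ge k_0}(g_{k+1} - g_k)
\]
(followed by $k_0 \to -\infty$) together with a careful regrouping of the $b_{k,j}$'s decomposes $f$ into pieces of the form $\lambda_{k,j} a_{k,j}$, where each $a_{k,j}$ is an $H^p(\RR^n)$-atom in the sense of Definition~3.1, supported near $Q_{k,j}$, and $\lambda_{k,j} \lesssim 2^k |Q_{k,j}|^{1/p}$. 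Summing via the layer-cake formula then yields
\[
\sum_{k,j}|\lambda_{k,j}|^p \lesssim \sum_k 2^{kp}|\Omega_k| \lesssim \int_{\RR^n} \MM(f)(x)^p\,dx \approx \|f\|_{H^p(\RR^n)}^p.
\]

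For the reverse inequality I would show that every $H^p$-atom $a$ supported in a ball $B$ of radius $r$ satisfies $\|a\|_{H^p(\RR^n)} \lesssim 1$ uniformly. On $2B$ this follows from $\|a\|_\infty \le |B|^{-1/p}$ combined with the $L^2$-boundedness of $\MM$; off $2B$, the smoothness of the Poisson kernel together with the vanishing moment of $a$ yields the pointwise decay $|\MM(a)(x)| \lesssim r^{n+1-n/p}|x - x_B|^{-(n+1)}$, whose $p$-th power is integrable on $(2B)^c$ precisely when $p(n+1) > n$, i.e.\ $p > n/(n+1)$. The $p$-subadditivity of $\|\cdot\|_{H^p(\RR^n)}^p$ then gives $\|\sum_j \lambda_j a_j\|_{H^p(\RR^n)}^p \lesssim \sum_j |\lambda_j|^p$, completing the equivalence of quasinorms.

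The step I expect to be the main obstacle is the convergence of the telescoping series $\sum_k (g_{k+1}-g_k)$ in $H^p(\RR^n)$, and not merely in $\mathcal{S}'$: when $p<1$ the ordinary triangle inequality fails, so I would rely on the $p$-subadditivity of $\|\cdot\|_{H^p(\RR^n)}^p$ together with the geometric decay of $2^{kp}|\Omega_k|$ as $k \to \pm\infty$ to control both tails. A secondary delicate point is the uniform bound $|c_{k,j}| \lesssim 2^k$, whose verification requires comparing $f$ on $Q_{k,j}$ with $\MM(f)$ evaluated at a nearby point of $\Omega_k^c$; the interplay between the Whitney geometry and the maximal function is exactly where constants must be tracked.
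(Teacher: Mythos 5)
The paper does not prove this theorem itself; it defers to [Theorem 2.3.12, \cite{Gr}], and your sketch---grand maximal function level sets, Whitney decomposition, moment-corrected bad parts, telescoping of the good parts, and the uniform bound $\|a\|_{H^p}\lesssim 1$ for atoms via the $p(n+1)>n$ tail estimate---is precisely the standard Calder\'on--Zygmund argument given there. The outline is correct, and the two difficulties you flag (convergence of the telescoping sum in $H^p$ rather than just in $\mathcal S'$, and the bound $|c_{k,j}|\lesssim 2^k$) are indeed the points where the cited proof does its real work.
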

\begin{theorem}
\label{atomic approximation}
Let $T$ be a bilinear Calder\'on-Zygmund operator, with $\frac{n}{n+1} < \epsilon < 1$, that is 2n-homogeneous in the lth component where $1\leq l\leq2$. Then for all $\varepsilon >0$, there exist $N > 0$ and $C>0$ such that for any $H^{p}(\mathbb{R}^{n})$-atom $a(x)$, there exist $g \in L^{q}(\mathbb{R}^{n})$, $h_{1} \in L^{r_{1}}(\mathbb{R}^{n})$ and $h_{2} \in L^{r_{1}}(\mathbb{R}^{n})$, with $\frac{1}{q}+\frac{1}{r_{1}} + \frac{1}{r_{2}} = \frac{1}{p}$, such that
$$
\|a-\Pi_{l}(g,h_{1},h_{2})\|_{H^{p}(\mathbb{R}^{n})} < \varepsilon\\
$$
and
$$
\|g\|_{L^{q}(\mathbb{R}^{n})} \|h_{1}\|_{L^{r_{1}}(\mathbb{R}^{n})} \|h_{2}\|_{L^{r_{2}}(\mathbb{R}^{n})} \leq CN^{2n}
$$

\end{theorem}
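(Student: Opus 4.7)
Take $l=1$ without loss of generality, and let $a$ be an $H^p(\RR^n)$-atom supported in $B:=B(x_0,r)$ with $\|a\|_\infty\leq|B|^{-1/p}$ and $\int a=0$. Following Uchiyama's scheme, I would force the main term of $\Pi_1$ to match $a$ by choosing centers $y_1,y_2\in\RR^n$ so that the three balls $B(x_0,r), B(y_1,r), B(y_2,r)$ are pairwise disjoint with $|x_0-y_j|\simeq Nr$, and setting
\[
 h_j:=|B(y_j,r)|^{-1/r_j}\chi_{B(y_j,r)},\qquad c:=T(h_1,h_2)(x_0),\qquad g:=-a/c.
\]
By construction $\|h_j\|_{L^{r_j}(\RR^n)}=1$, while the $2n$-homogeneity applied to those three balls gives $|c|\gtrsim N^{-2n}\|h_1\|_1\|h_2\|_1$; combined with $\|a\|_{L^q(\RR^n)}\leq|B|^{1/q-1/p}$ and the scaling identity $1/q+1/r_1+1/r_2=1/p$, this delivers $\|g\|_{L^q}\|h_1\|_{L^{r_1}}\|h_2\|_{L^{r_2}}\lesssim N^{2n}$ after the powers of $r$ cancel.

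\textbf{Decomposing $\Pi_1$.} Since $\operatorname{supp}(g)\subseteq B(x_0,r)$ and $\operatorname{supp}(h_1)\subseteq B(y_1,r)$ are disjoint, the two summands of $\Pi_1(g,h_1,h_2)=h_1T_1^*(g,h_2)-gT(h_1,h_2)$ can be analysed separately. On $B(x_0,r)$ the smoothness of $K$ with $|x-x_0|\leq r\ll Nr$ gives $|T(h_1,h_2)(x)-c|\lesssim N^{-\epsilon}|c|$, hence $-gT(h_1,h_2)=a+E_1$ with $\|E_1\|_\infty\lesssim N^{-\epsilon}|B|^{-1/p}$. On $B(y_1,r)$ the cancellation $\int g=0$ (inherited from $\int a=0$) lets me write
\[
 T_1^*(g,h_2)(x)=\int\bigl[K(y_1',x,y_2')-K(x_0,x,y_2')\bigr]g(y_1')h_2(y_2')\,dy_1'dy_2',
\]
and the kernel smoothness together with the size bookkeeping $\|g\|_1\lesssim N^{2n}r^{n/q'}$, $\|h_2\|_1=r^{n/r_2'}$, $\|h_1\|_\infty=r^{-n/r_1}$ yields the matching bound $\|E_2\|_\infty\lesssim N^{-\epsilon}|B|^{-1/p}$ for $E_2:=h_1T_1^*(g,h_2)$.

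\textbf{Applying the preceding lemma.} The error $f:=a-\Pi_1(g,h_1,h_2)$ is supported in $B(x_0,r)\cup B(y_1,r)$ and satisfies $\int f=0$, since $\int a=0$ and the $T$--$T_1^*$ duality $\langle h_1,T_1^*(g,h_2)\rangle=\langle g,T(h_1,h_2)\rangle$ forces $\int\Pi_1=0$. As $|x_0-y_1|\simeq Nr$, the preceding lemma applies with $C_1,C_2\lesssim N^{-\epsilon}|B|^{-1/p}$ and yields, after the $p$-th-power bookkeeping, $\|f\|_{H^p(\RR^n)}^p\lesssim N^{n(1-p)-\epsilon p}\log_2 N$, which can be made smaller than any prescribed $\varepsilon^p$ by taking $N$ sufficiently large, provided $\epsilon p>n(1-p)$. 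The main obstacle is precisely this last balance: the $N^{-\epsilon}$ savings from the kernel regularity must beat the $N^{n(1-p)}$ blow-up of the reconstruction lemma, and this is what couples the standing assumptions $\epsilon>n/(n+1)$ and $p>n/(n+1)$ (the effective joint condition being $p>n/(n+\epsilon)$); once this inequality is in hand, the quantitative estimate tells one how large $N$ must be chosen in terms of $\varepsilon$.
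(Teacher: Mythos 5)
Your construction is correct and follows the same Uchiyama-type scheme as the paper --- two auxiliary balls at distance $\simeq Nr$ carrying indicator functions, the atom divided by a kernel value bounded below via homogeneity, the error controlled by the kernel's H\"older smoothness and then fed into the reconstruction lemma --- but you swap the roles of the two summands of $\Pi_l$. The paper places the atom in the $h_l$ slot and normalizes by $T_l^*(h_1,g)(x_0)$, so the reproducing term is $h_lT_l^*(\cdots)$ and the correction term $gT(h_1,h_2)$ lives on $B(y_l,r)$; you place the atom in the $g$ slot and normalize by $T(h_1,h_2)(x_0)$, so the reproducing term is $-gT(h_1,h_2)$ and the correction $h_1T_1^*(g,h_2)$ lives on $B(y_1,r)$. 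The two arrangements are symmetric, your exponent bookkeeping checks out, and your $L^{r_j}$-normalization of the $h_j$ is cosmetic; the one substantive point of divergence is that your variant invokes the lower bound on $K(x_0,y_1',y_2')$ with the atom's center in the zeroth (output) slot, whereas the paper uses the ``$2n$-homogeneous in the $l$th component'' hypothesis with $x_0$ sitting in the $l$th input slot of $K$. Under the definition of homogeneity as literally stated (a lower bound on $|K|$ over configurations of pairwise disjoint balls) the two usages are covered by the same condition, but if one reads ``in the $l$th component'' restrictively, your choice would require nondegeneracy in the zeroth component instead, so it is worth stating which slot you need. Your closing observation is also worth keeping: choosing $N$ large only closes the argument when $\epsilon p - n(1-p) > 0$, i.e.\ $p > n/(n+\epsilon)$, which is strictly stronger than $p > n/(n+1)$ when $\epsilon<1$; the paper passes over this point silently when it selects $N$ with $\log N / N^{\epsilon p - n(1-p)} < \varepsilon^p$, and you are right to make the joint constraint explicit.
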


\begin{proof}
Let $\varepsilon >0$ be given and $a(x)$ be an $H^{p}(\mathbb{R}^{n})$-atom with supp $a \in B(x_{0},r)$ for some $x_{0}\in \RR^{n}, r>0$. Fix $1\leq l \leq 2$ and choose $N$ sufficiently large such that $\frac{logN}{N^{\epsilon p - n(1-p)}} < \varepsilon^{p}$.
Choose $y_{l} \in \mathbb{R}^{n}$ such that $y_{l,i} - x_{0,i} =\frac{N r}{\sqrt{n}}, i=1,\cdots,n$, where $x_{0,i}, y_{l,i}$ represent the ith coordinate of $x_{0},y_{l}$, respectively.\\
Now choose $y_{\tilde{l}}$ so that $y_{l}$ and $y_{\tilde{l}}$ satisfy the same relationship as $x_{0}$ and $y_{\tilde{l}}$, i.e. $|y_{l,i} - y_{\tilde{l},i}| = \frac{Nr}{\sqrt{n}}$, where $\tilde{l} = 1$ if $l=2$ and $\tilde{l} =2$ otherwise.\\ In this proof, we will take the case $l=2$ and $\tilde{l} = 1$. The other case is similar.
Let \begin{align*}
g(x) &= \chi_{B(y_{2},r)} (x)\\
h_{1}(x) &= \chi_{B(y_{1},r)} (x)\\
h_{2}(x) &= \frac{a(x)}{{T_{2}}^{*}(h_{1},g)(x_{0})}\\
\end{align*}
Then, 
\[
\text{supp } g = B(y_{2}, r), \text{ supp } h_{1} = B(y_{1}, r) \text{ and supp } h_{2} = B(x_{0},r).
\]
Moreover, we have
\[
\|g\|_{L^{q}(\mathbb{R}^n)} = |B(y_{2}, r)|^{1/q} \approx r^{n/q},\\
\|h_{1}\|_{L^{r_{1}}(\mathbb{R}^n)} = |B(y_{1}, r)|^{1/r_{1}} \approx r^{n/r_{1}},\\
\]
and,
\[
\|h_{2}\|_{L^{r_{2}}(\mathbb{R}^{n})} = \frac{\|a\|_{L^{r_{2}}(\mathbb{R}^{n})}}{|T_{2}^{*}(h_{1},g)(x_{0})|} \lesssim \frac{\|a\|_{L^{\infty}} |B(x_{0},r)|^{1/r_{2}}}{CN^{2n}} \lesssim \frac{r^{-n/p} r^{n/r_{2}}}{CN^{2n}},
\]
where we have used the fact that $T_{2}^{*}$ is $2n$-homogeneous, since T is.\\
Thus,
\[
\|g\|_{L^{q}(\mathbb{R}^{n})} \|h_{1}\|_{L^{r_{1}}(\mathbb{R}^{n})} \|h_{2}\|_{L^{r_{2}}(\mathbb{R}^{n})} \lesssim CN^{2n}.
\]
Let \begin{align*}
f(x)&:= a(x) - \pi_{l}(g,h_{1},h_{2})(x)\\
& = a(x) - (h_{2} T_{2}^{*}(h_{1},g)(x) - gT(h_{1},h_{2})(x))\\
&= \underbrace{a(x) \Big( \frac{T_{2}^{*}(h_{1},g)(x_{0}) - T_{2}^{*}(h_{1},g)(x)}{T_{2}^{*}(h_{1},g)(x_{0})}\Big)}_{W_{1}(x)} - \underbrace{g(x) T(h_{1},h_{2})(x)}_{W_{2}(x)},\\
\end{align*}
and so,
\[
|f(x)| \leq |W_{1}(x)| + |W_{2}(x)|.
\]
Our goal is to get the desired bound for each of $W_{1}$ and $W_{2}$. First note that supp $W_{1} \subseteq B(x_{0},r)$ and supp $W_{2} \subseteq B(y_{2},r)$. So, for $x\in B(x_{0},r)$, by using $2n$-homogeneity of $T_{2}^{*}$, we have
\begin{align*}
|W_{1}(x)| &\leq \|a\|_{L^{\infty}}(\mathbb{R}^{n}) C N^{2n} |T_{2}^{*} (h_{1},g)(x_{0}) - T_{2}^{*}(h_{1},g)(x)|\\
&= r^{-n/p} C N^{2n} \Big| \int K(z_{2},z_{1},x_{0}) h_{1}(z_{1})g(z_{2})dz_{2}dz_{1} - \int K(z_{2},z_{1},x) h_{1}(z_{1})g(z_{2})dz_{2}dz_{1} \Big|\\
&\leq C r^{-n/p}N^{2n} \int_{B(y_{1},r)\times B(y_{2},r)} |K(z_{2},z_{1},x_{0})-K(z_{2},z_{1},x)| dz_{2}dz_{1}\\
&\leq C r^{-n/p}N^{2n} \int_{B(y_{1},r)\times B(y_{2},r)} \frac{|x_{0}-x|^{\epsilon}}{(|z_{2}-z_{1}|+|z_{2}-x_{0}|)^{2n+\epsilon}} dz_{2}dz_{1}\\
&\leq C r^{-n/p}N^{2n} \frac{r^{\varepsilon}}{{(N r)}^{2n+\epsilon}} r^{n} r^{n}\\
&\leq C \frac{1}{r^{n/p}N^{\epsilon}}
\end{align*}\\
Now, for $x\in B(y_{2},r)$, we have
\begin{align*}
|W_{2}(x)| &= |T(h_{1},h_{2})(x)|\\
&= \Big| \int_{\mathbb{R}^{2n}} K(x,y_{1},y_{2}) h_{1}(y_{1}) h_{2}(y_{2}) dy_{1}dy_{2} \Big|\\
&= \Big| \int_{B(y_{1},r)\times B(x_{0},r)} K(x,y_{1},y_{2}) \frac{a(x)}{T_{2}^{*}(h_{1},g)(x_{0})} dy_{1}dy_{2} \Big|\\
&= \Big| \int_{B(y_{1},r)\times B(x_{0},r)} \Big( K(x,y_{1},y_{2})-K(x_{0},y_{1},y_{2})\Big) \frac{a(y_{2})}{T_{2}^{*}(h_{1},g)(x_{0})} dy_{1}dy_{2} \Big|\\
&\leq \int_{B(y_{1},r)\times B(x_{0},r)} \Big| K(x,y_{1},y_{2})-K(x_{0},y_{1},y_{2})\Big| \frac{|a(y_{2})|}{|T_{2}^{*}(h_{1},g)(x_{0})|} dy_{1}dy_{2} \\
&\lesssim \|a\|_{L^{\infty}(\mathbb{R}^{n})} CN^{2n} \int_{B(y_{1},r)\times B(x_{0},r)} \frac{|x-x_{0}|^{\epsilon}}{(|y_{2}-x_{0}|+|y_{1}-x_{0}|)^{2n+\epsilon}} dy_{1}dy_{2}\\
&\lesssim r^{-n/p} C N^{2n} \frac{r^{\epsilon}}{(N r)^{2n+\epsilon}} r^{n} r^{n}\\
&\leq C \frac{1}{N^{\epsilon} r^{n/p}}.
\end{align*}
Therefore,
\[
|f(x)| \leq |W_{1}(x)|+|W_{2}(x)| \lesssim C \frac{1}{N^{\epsilon} r^{n/p}} (\chi_{B({x_{0},r})}+\chi_{B(y_{2},r)}).\\
\]
By previous lemma,
\[
\|f\|_{H^{p}(\mathbb{R}^{n})}^{p} \leq C \frac{log N}{N^{\epsilon p - n(1-p)}} <\varepsilon.
\]
\end{proof}
\bigskip

\section{Proof of Main Results}
For simplicity of notations, we will state and prove Theorems \ref{Thm1Multi} and \ref{Thm2Multi} in the bilinear case.

\begin{theorem}\label{Thm1Bi}
Let $T$ be a bilinear Calder\'on-Zygmund operator that is $2n$-homogeneous in the lth component, with $\frac{n}{n+1} < \epsilon < 1$, where $1\leq l \leq 2$. Then, for every $f \in H^{p}(\RR^{n})$, there exist sequences $ \{ \lambda_{j}^{k} \} \subseteq l^{p}, \{ g_{j}^{k}\} \subseteq L^{q}(\RR^{n}), \{h_{j,1}^{k}\} \subseteq L^{r_{1}}(\RR^{n}), \{h_{j,2}^{k}\} \subseteq L^{r_{2}}(\RR^{n})$, with $\frac{1}{q} + \frac{1}{r_{1}} + \frac{1}{r_{2}} = \frac{1}{p}$, such that
\begin{align}\label{factorization3}
f=\sum_{k=1}^{\infty} \sum_{j=1}^{\infty} \lambda_{j}^{k} \Pi_{l}(g_{j}^{k}, h_{j,1}^{k}, h_{j,2}^{k}) \text{ in } H^{p}(\RR^{n}),
\end{align}
where $$\Pi_{1}(g_{j}^{k}, h_{j,1}^{k}, h_{j,2}^{k}) = h_{j,1}^{k} T_{1}^{*} (g_{j}^{k},h_{j,2}^{k}) - g_{j}^{k} T(h_{j,1}^{k}, h_{j,2}^{k})$$
and $$\Pi_{2}(g_{j}^{k}, h_{j,1}^{k}, h_{j,2}^{k}) = h_{j,2}^{k} T_{2}^{*} (h_{j,1}^{k}, g_{j}^{k}) - g_{j}^{k} T(h_{j,1}^{k}, h_{j,2}^{k}).$$\\
Moreover, we have
\[
\|f\|_{H^{p}(\RR^{n})} \approx \inf \big( \{ \sum_{k=1}^{\infty} \sum_{j=1}^{\infty} |\lambda_{j}^{k}|^{p} \|g_{j}^{k}\|_{L^{q}(\RR^{n})} \|h_{j,1}^{k}\|_{L^{r_{1}}(\RR^{n})} \|h_{j,2}^{k}\|_{L^{r_{2}}(\RR^{n})}\} \big)^{1/p}, \text{ such that } f \text{ satisfies }  \eqref{factorization}.
\]
\end{theorem}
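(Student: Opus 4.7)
The plan is to follow Uchiyama's iteration scheme, bootstrapping the atomic decomposition (Theorem \ref{atomdecomp}) against the atomic approximation result (Theorem \ref{atomic approximation}). Given $f \in H^{p}(\RR^{n})$, first use Theorem \ref{atomdecomp} to write $f = \sum_{j} \lambda_{j}^{1} a_{j}^{1}$ with $\bigl(\sum_{j} |\lambda_{j}^{1}|^{p}\bigr)^{1/p} \lesssim \|f\|_{H^{p}(\RR^{n})}$. Fix a tolerance $\varepsilon \in (0,1)$ to be chosen later, and apply Theorem \ref{atomic approximation} to each atom $a_{j}^{1}$ to obtain $g_{j}^{1}, h_{j,1}^{1}, h_{j,2}^{1}$ with $\|g_{j}^{1}\|_{L^{q}}\|h_{j,1}^{1}\|_{L^{r_{1}}}\|h_{j,2}^{1}\|_{L^{r_{2}}} \leq C N^{2n}$ and $\|a_{j}^{1} - \Pi_{l}(g_{j}^{1}, h_{j,1}^{1}, h_{j,2}^{1})\|_{H^{p}(\RR^{n})} < \varepsilon$.

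Set $F_{1} := f - \sum_{j} \lambda_{j}^{1}\Pi_{l}(g_{j}^{1}, h_{j,1}^{1}, h_{j,2}^{1}) = \sum_{j}\lambda_{j}^{1}\bigl(a_{j}^{1}-\Pi_{l}(g_{j}^{1},h_{j,1}^{1},h_{j,2}^{1})\bigr)$. Using $p$-subadditivity of $\|\cdot\|_{H^{p}}^{p}$ (valid for $p<1$), one gets $\|F_{1}\|_{H^{p}}^{p} \leq \sum_{j}|\lambda_{j}^{1}|^{p}\varepsilon^{p} \lesssim \varepsilon^{p}\|f\|_{H^{p}}^{p}$. Now iterate: apply Theorem \ref{atomdecomp} to $F_{1}$ producing new atoms $\{a_{j}^{2}\}$ and coefficients $\{\lambda_{j}^{2}\}$, then apply Theorem \ref{atomic approximation} to each $a_{j}^{2}$, and so on. Inductively $\|F_{k}\|_{H^{p}} \leq (C\varepsilon)^{k}\|f\|_{H^{p}}$ for some absolute $C$. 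Choosing $\varepsilon$ so that $C\varepsilon<1$ forces $F_{k}\to 0$ in $H^{p}$ and gives the convergent double series \eqref{factorization3}.

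For the norm equivalence, the construction yields
\[
\sum_{k,j}|\lambda_{j}^{k}|^{p}\|g_{j}^{k}\|_{L^{q}}\|h_{j,1}^{k}\|_{L^{r_{1}}}\|h_{j,2}^{k}\|_{L^{r_{2}}} \lesssim C N^{2n}\sum_{k=1}^{\infty}(C\varepsilon)^{p(k-1)}\|f\|_{H^{p}}^{p} \lesssim \|f\|_{H^{p}(\RR^{n})}^{p},
\]
so taking the infimum proves one direction. The reverse inequality $\|f\|_{H^{p}}^{p}\lesssim\sum_{k,j}|\lambda_{j}^{k}|^{p}\|g_{j}^{k}\|_{L^{q}}\|h_{j,1}^{k}\|_{L^{r_{1}}}\|h_{j,2}^{k}\|_{L^{r_{2}}}$ follows from $p$-subadditivity together with the bound $\|\Pi_{l}(g,h_{1},h_{2})\|_{H^{p}}\lesssim\|g\|_{L^{q}}\|h_{1}\|_{L^{r_{1}}}\|h_{2}\|_{L^{r_{2}}}$. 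This last inequality is the easy direction of the commutator characterization: by $(H^{p})^{*}=\mathrm{Lip}_{\alpha}$ and the adjointness identity $\langle b,\Pi_{l}(g,h_{1},h_{2})\rangle=\langle[b,T]_{l}(h_{1},h_{2}),g\rangle$, it reduces to the classical estimate $\|[b,T]_{l}(h_{1},h_{2})\|_{L^{q'}}\lesssim\|b\|_{\mathrm{Lip}_{\alpha}}\|h_{1}\|_{L^{r_{1}}}\|h_{2}\|_{L^{r_{2}}}$ proved in \cite{Chaffee}.

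The principal obstacle I foresee is controlling constants across the iteration. The whole scheme depends critically on Theorem \ref{atomic approximation} supplying a uniform product bound $C N^{2n}$ independent of the scale $r$ of the atom being approximated; any scale-dependence would spoil the geometric convergence. A secondary subtlety is that the atoms produced by Theorem \ref{atomdecomp} at each stage of the iteration must themselves satisfy the cancellation condition required to feed into Theorem \ref{atomic approximation}; because $p>n/(n+1)$ this condition reduces to the vanishing mean, which is built into Definition \ref{def of atom}, so the scheme closes cleanly in the range considered.
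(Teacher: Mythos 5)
Your iteration scheme is exactly the paper's argument: atomic decomposition, then Theorem \ref{atomic approximation} applied atom by atom with the uniform bound $CN^{2n}$, $p$-subadditivity of $\|\cdot\|_{H^p}^p$ to get $\|E_k\|_{H^p}\leq (C\varepsilon)^k\|f\|_{H^p}$, and geometric summation after choosing $C\varepsilon<1$. That part is correct, and the two ``obstacles'' you flag are indeed non-issues for the reasons you give (the constant in Theorem \ref{atomic approximation} is scale-free, and for $p>\tfrac{n}{n+1}$ only the vanishing mean is needed). The upper bound $\sum_{k,j}|\lambda_j^k|^p\|g_j^k\|_{L^q}\|h_{j,1}^k\|_{L^{r_1}}\|h_{j,2}^k\|_{L^{r_2}}\lesssim\|f\|_{H^p}^p$ also matches the paper.

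The gap is in your reverse inequality. You reduce it, correctly, to the single estimate $\|\Pi_l(g,h_1,h_2)\|_{H^p(\RR^n)}\lesssim\|g\|_{L^q}\|h_1\|_{L^{r_1}}\|h_2\|_{L^{r_2}}$, but your proposed proof of that estimate --- dualize the commutator bound of \cite{Chaffee} through $(H^p)^*=\mathrm{Lip}_\alpha$ --- does not work for $p<1$. The duality only runs one way: $H^p$ is a non--locally convex quasi-Banach space, Hahn--Banach fails, and the canonical map $H^p\to(\mathrm{Lip}_\alpha)^*$ is bounded but not bounded below. Concretely, if $F=\sum_{j=1}^N N^{-1/p}a_j$ with $a_j$ atoms on disjoint, widely separated unit balls, then $\|F\|_{H^p}\approx 1$ while $\sup_{\|b\|_{\mathrm{Lip}_\alpha}\leq1}|\langle b,F\rangle|\lesssim N^{1-1/p}\to0$. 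So knowing $|\langle b,\Pi_l(g,h_1,h_2)\rangle|=|\langle g,[b,T]_l(h_1,h_2)\rangle|\lesssim\|b\|_{\mathrm{Lip}_\alpha}\|g\|_{L^q}\|h_1\|_{L^{r_1}}\|h_2\|_{L^{r_2}}$ for every $b$ does not place $\Pi_l(g,h_1,h_2)$ in $H^p$ with the right quasi-norm; this is precisely why, for $p<1$, one cannot run the Coifman--Rochberg--Weiss duality shortcut and must build the factorization constructively. The paper closes this step by citing \cite{LMW} for the direct boundedness of $\Pi_l:L^q\times L^{r_1}\times L^{r_2}\to H^p$ (a molecular-type estimate on $\Pi_l(g,h_1,h_2)$ itself), and you need such a direct argument here as well.
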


\begin{theorem}\label{Thm2Bi}
Let $T$ be a bilinear Calder\'on-Zygmund operator, that is $2n$-homogeneous in the lth component, with $\frac{n}{n+1} < \epsilon < 1$, $b \in L^{1}_{loc}(\RR^{n})$ and $\alpha>0$ such that $\frac{\alpha}{n} = \frac{1}{p} - 1$. Suppose that $b \in Lip_{\alpha}(\RR^{n})$, then
\[
\|b\|_{Lip_{\alpha}(\RR^{n})} \approx \|[b,T]_{l}\|_{L^{r_{1}}(\RR^{n})\times L^{r_{2}}(\RR^{n}) \rightarrow L^{q'}(\RR^{n})},
\]
where $\frac{1}{q} + \frac{1}{r_{1}} + \frac{1}{r_{2}} = \frac{1}{p}$ and $q'$ is the dual exponent of $q$.

\end{theorem}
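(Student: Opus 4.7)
The plan is to establish the two-sided bound by proving each inequality separately. The upper bound $\|[b,T]_l\|_{L^{r_1}\times L^{r_2}\to L^{q'}}\lesssim\|b\|_{\mathrm{Lip}_\alpha}$ is a direct kernel estimate that reduces to a bilinear Hardy--Littlewood--Sobolev bound; the lower bound $\|b\|_{\mathrm{Lip}_\alpha}\lesssim\|[b,T]_l\|$ is the deeper direction and relies on the weak factorization of Theorem \ref{Thm1Bi} together with the $H^p$--$\mathrm{Lip}_\alpha$ duality of \cite{DRS}. I treat the case $l=2$; the case $l=1$ is symmetric after relabeling the kernel arguments.

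For the upper bound, I would begin from the kernel representation
\[
[b,T]_2(f_1,f_2)(x)=\int_{\RR^{2n}}K(x,y_1,y_2)\bigl(b(y_2)-b(x)\bigr)f_1(y_1)f_2(y_2)\,dy_1\,dy_2.
\]
The Lipschitz bound $|b(y_2)-b(x)|\le\|b\|_{\mathrm{Lip}_\alpha}|y_2-x|^\alpha$, the trivial inequality $|y_2-x|\le|x-y_1|+|x-y_2|+|y_1-y_2|$, and the size estimate on $K$ combine to give the pointwise domination
\[
|[b,T]_2(f_1,f_2)(x)|\lesssim\|b\|_{\mathrm{Lip}_\alpha}\int_{\RR^{2n}}\frac{|f_1(y_1)||f_2(y_2)|\,dy_1\,dy_2}{(|x-y_1|+|x-y_2|+|y_1-y_2|)^{2n-\alpha}}.
\]
The right-hand side is the bilinear fractional integral $\mathcal I_\alpha$, whose $L^{r_1}\times L^{r_2}\to L^{q'}$ boundedness is a known multilinear Hardy--Littlewood--Sobolev theorem, and the exponent identity $\tfrac{1}{q'}=\tfrac{1}{r_1}+\tfrac{1}{r_2}-\tfrac{\alpha}{n}$ required for its application is immediate from the hypotheses $\tfrac{1}{q}+\tfrac{1}{r_1}+\tfrac{1}{r_2}=\tfrac{1}{p}$ and $\tfrac{\alpha}{n}=\tfrac{1}{p}-1$.

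For the lower bound, I invoke $\|b\|_{\mathrm{Lip}_\alpha}\approx\sup_{\|f\|_{H^p}\le 1}|\langle b,f\rangle|$ from \cite{DRS}. Given $f\in H^p$, Theorem \ref{Thm1Bi} supplies $f=\sum_{k,j}\lambda_j^k\Pi_l(g_j^k,h_{j,1}^k,h_{j,2}^k)$ with norm control. A short calculation using the definition of $T_l^*$ produces the key duality identity
\[
\langle b,\Pi_l(g,h_1,h_2)\rangle=\langle g,[b,T]_l(h_1,h_2)\rangle,
\]
whence H\"older's inequality yields $|\langle b,\Pi_l(g,h_1,h_2)\rangle|\le\|[b,T]_l\|\,\|g\|_{L^q}\|h_1\|_{L^{r_1}}\|h_2\|_{L^{r_2}}$. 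After homogeneously rescaling each factor to unit Lebesgue norm (absorbing the scalars into the $\lambda_j^k$), the subadditivity $\sum_j|a_j|\le(\sum_j|a_j|^p)^{1/p}$ valid for $0<p\le 1$ lets me pass from the $\ell^1$ sum produced by duality to the $\ell^p$ norm in the factorization bound, giving $|\langle b,f\rangle|\lesssim\|[b,T]_l\|\,\|f\|_{H^p}$. Taking the supremum over $f$ completes the proof.

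The main obstacle I anticipate is the $\ell^p$ arithmetic for $p<1$: the weak factorization pairs $|\lambda_j^k|^p$ with a single power of each Lebesgue norm, whereas the triangle inequality applied to $\langle b,f\rangle$ naturally produces first powers of $|\lambda_j^k|$ together with first powers of the norms. The homogeneous rescaling of $(g_j^k,h_{j,1}^k,h_{j,2}^k)$ bridges this gap, but it must be executed before summation, and the inequality $(a+b)^p\le a^p+b^p$ for $p\le 1$ is essential to convert the resulting $\ell^1$ bound into an $\ell^p$ bound that matches Theorem \ref{Thm1Bi}. A more routine technical point is verifying the adjoint identity between $\langle b,\Pi_l(g,h_1,h_2)\rangle$ and $\langle g,[b,T]_l(h_1,h_2)\rangle$ with all integrals absolutely convergent; this follows from the $L^r$-boundedness of $T$ and $T_l^*$ together with $b\in L^1_{\mathrm{loc}}$.
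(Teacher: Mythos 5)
Your proof of the substantive direction, $\|b\|_{\mathrm{Lip}_\alpha}\lesssim\|[b,T]_l\|$, follows the same route as the paper: the weak factorization of Theorem \ref{Thm1Bi}, the duality identity $\langle b,\Pi_l(g,h_1,h_2)\rangle=\langle g,[b,T]_l(h_1,h_2)\rangle$, H\"older's inequality, and the $H^p$--$\mathrm{Lip}_\alpha$ duality. You are in fact more explicit than the paper on the one delicate point: the pairing produces the $\ell^1$-type sum $\sum_{k,j}|\lambda_j^k|\,\|g_j^k\|_{L^q}\|h_{j,1}^k\|_{L^{r_1}}\|h_{j,2}^k\|_{L^{r_2}}$, whereas the factorization controls $\sum_{k,j}|\lambda_j^k|^p$ with first powers of the norms; the paper jumps over this without comment, while your rescaling plus $p$-subadditivity bridges it. Do note, however, that your step $\bigl(\sum|\tilde\lambda_j^k|^p\bigr)^{1/p}\lesssim\|f\|_{H^p}$ after rescaling does not follow from the stated norm equivalence alone (which carries first, not $p$-th, powers of the Lebesgue norms); it needs the uniform bound $\|g_j^k\|_{L^q}\|h_{j,1}^k\|_{L^{r_1}}\|h_{j,2}^k\|_{L^{r_2}}\lesssim N^{2n}$ with $N$ fixed, together with $\sum_{k,j}|\lambda_j^k|^p\lesssim\|f\|_{H^p}^p$, both of which come from the explicit construction in Theorems \ref{atomic approximation} and \ref{Thm1Bi}; you should say you are using that. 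For the easy direction the paper simply cites \cite{LMW}, whereas you sketch the standard self-contained argument via pointwise domination by the bilinear fractional integral and the multilinear Hardy--Littlewood--Sobolev inequality, with the correct exponent check $\tfrac{1}{q'}=\tfrac{1}{r_1}+\tfrac{1}{r_2}-\tfrac{\alpha}{n}$; that is a legitimate alternative which buys self-containedness at the cost of invoking (or proving) the multilinear HLS theorem.
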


To prove Theorem \ref{Thm1Bi}, we will follow Uchiyama's algorithm. Given a function in $H^{p}$, one can use atomic decomposition to decompose this function into atoms. On the other hand, given an atom, one can prove that it can indeed be approximated by the multiplication operator $\Pi_{l}$ in $H^{p}$. Using this result and atomic decomposition, one can decompose $f$ into terms and use an iterative argument to get the desired form \eqref{factorization}.
\begin{proof}[Proof of Theorem \ref{Thm1Bi}]
Let $f\in H^{p}(\RR^{n})$. By Theorem \ref{atomdecomp}, there exist some $\{\lambda_{j}^{1}\} \in l^{p}$, and a sequence $\{ a_{j}^{1} \}$ of $p$-atoms and some constant $C$ such that 
\[
f = \sum_{j=1}^{\infty} \lambda_{j}^{1} a_{j}^{1} \text{ with } (\sum_{j=1}^{\infty} |\lambda_{j}^{1}|^{p})^{1/p} \leq C \|f\|_{H^{p}(\RR^{n})}.
\]
Now let $\varepsilon >0$ so that $ \varepsilon C <1$. By Theorem \ref{atomic approximation}, there exist $\{ g_{j}^{1} \} \subseteq L^{q}(\RR^{n}), \{h_{j,1}^{1}\} \subseteq L^{r_{1}}(\RR^{n}), \{h_{j,2}^{1}\} \subseteq L^{r_{2}}(\RR^{n})$ with 
\[
\|g_{j}^{1}\|_{L^{q}(\RR^{n})} \|h_{j,1}^{1} \|_{L^{r_{1}}(\RR^{n})} \|h_{j,2}^{1} \|_{L^{r_{2}}(\RR^{n})} < C^{\prime} N^{2n},
\]
such that
\[
\| a_{j}^{1} - \Pi_{l}(g_{j}^{1}, h_{j,1}^{1}, h_{j,2}^{1})\|_{H^{p}(\RR^{n})} < \varepsilon \text{ for all } j.
\]
Now, 
\begin{align*}
f &=\sum_{j=1}^{\infty} \lambda_{j}^{1}a_{j}^{1}\\
  &=\sum_{j=1}^{\infty} \lambda_{j}^{1} \Pi_{l}(g_{j}^{1},h_{j,1}^{1}, h_{j,2}^{1}) +\sum_{j=1}^{\infty} \lambda_{j}^{1}\big(a_{j}^{1} - \Pi_{l}(g_{j}^{1}, h_{j,1}^{1},h_{j,2}^{1}) \big)\\
  &=: M_{1} + E_{1}.
\end{align*}
Notice that
\[
\|E_{1}\|_{H^{p}(\RR^{n})} \leq \Big(\sum_{j=1}^{\infty}|\lambda_{j}^{1}|^{p} \|a_{j}^{1} - \Pi_{l}(g_{j}^{1},h_{j,1}^{1},h_{j,2}^{1})\|_{H^{p}(\RR^{n})}^{p} \Big)^{1/p} \leq C \|f\|_{H^{p}(\RR^{n})} < \infty,
\]
so that $E_{1} \in H^{p}(\RR^{n})$ and so Theorem \ref{atomdecomp} implies there exist $\{ \lambda_{j}^{2} \} \in l^{p}$ and $\{a_{j}^{2}\}$, a sequence of $p$-atoms such that
\[
E_{1} = \sum_{j=1}^{\infty} \lambda_{j}^{2} a_{j}^{2} \text{ with } \| \lambda_{j}^{2}\|_{l^{p}} = (\sum_{j=1}^{2} | \lambda_{j}^{2} |^{p})^{1/p} \leq C \|E_{1} \|_{H^{p}(\RR^{n})} \leq C^{2} \varepsilon \|f\|_{H^{p}(\RR^{n})}, 
\]
and now by Theorem \ref{atomic approximation}, for each $p$-atom $a_{j}^{2}$, there exist $g_{j}^{2} \in L^{q}(\RR^{n})$, $h_{j,1}^{2} \in L^{r_{1}}(\RR^{n})$ and $h_{j,2}^{2} \in L^{r_{2}(\RR^{n})}$ with $\|g_{j}^{2}\|_{L^{q}(\RR^{n})} \|h_{j,1}^{2}\|_{L^{r_{1}}(\RR^{n})} \|h_{j,2}^{2}\|_{L^{r_{2}}(\RR^{n})} < C^{\prime} N^{2n}$ such that 
\[
\|a_{j}^{2} - \Pi_{l}(g_{j}^{2}, h_{j,1}^{2}, h_{j,2}^{2})\|_{H^{p}(\RR^{n})} < \varepsilon.
\]
We apply the same iterative argument used on $f$ but this time to $E_{1}$, to get
\begin{align*}
E_{1} &= \sum_{j=1}^{\infty} \lambda_{j}^{2} a_{j}^{2}\\
&= \sum_{j=1}^{\infty} \lambda_{j}^{2} \Pi_{l}(g_{j}^{2}, h_{j,1}^{2}, h_{j,2}^{2}) + \sum_{j=1}^{\infty} \lambda_{j}^{2} (a_{j}^{2} - \Pi_{l}(g_{j}^{2}, h_{j,1}^{2}, h_{j,2}^{2})\\
&=: M_{2} + E_{2},
\end{align*}
which implies,
\[
f=M_{1} + M_{2} + E_{2} = \sum_{k=1}^{2} M_{k} + E_{2}.
\]
Similar to what we did before, we get
\[
\|E_{2}\|_{H^{p}(\RR^{n})} \leq \Big( \sum_{j=1}^{\infty} | \lambda_{j}^{2}|^{p} \|a_{j}^{2} - \Pi_{l}(g_{j}^{2}, h_{j,1}^{2}, h_{j,2}^{2})\|^{p}_{H^{p}(\RR^{n})} \Big)^{1/p} \leq C^{2} \varepsilon ^{2} \|f\|_{H^{p}(\RR^{n})},
\]
and so $E_{2} \in H^{p}(\RR^{n})$.
We keep repeating the same iteration process to get that, for $M \in \mathbb{N}$, $f$ can be represented as
\[
f = \sum_{k=1}^{M} \sum_{j=1}^{\infty} \lambda_{j}^{k} \Pi_{l}(g_{j}^{k}, h_{j,1}^{k}, h_{j,2}^{k}) + E_{M},
\]
where, for $j, k$, $\lambda_{j}^{k} \in l^{p}$, $g_{j}^{k} \in L^{q}(\RR^{n}), h_{j,1}^{k} \in L^{r_{1}}(\RR^{n}), h_{j,2}^{k} \in L^{r_{2}}(\RR^{n})$, with
\begin{align}\label{bound}
\|g_{j}^{k}\|_{L^{q}(\RR^{n})} \|h_{j,1}^{k}\|_{L^{r_{1}}(\RR^{n})} \|h_{j,2}^{k}\|_{L^{r_{2}}(\RR^{n})} \lesssim C N^{2n},
\end{align}
and
\[
E_{M} = \sum_{k=1}^{\infty} \sum_{j=1}^{\infty} \lambda_{j}^{k} (a_{j}^{k} - \Pi_{l}(g_{j}^{k}, h_{j,1}^{k}, h_{j,2}^{k})),
\]
with
\[
(\sum_{j=1}^{\infty} |\lambda_{j}^{k}|^{p})^{1/p} \leq C^{k} \varepsilon^{k-1} \|f\|_{H^{p}(\RR^{n})} \text{ and } \|a_{j}^{2} - \Pi_{l}(g_{j}^{k}, h_{j,1}^{k}, h_{j,2}^{k})\|_{H^{p}(\RR^{n})} < \varepsilon.
\]
This implies $E_{M} \in H^{p}(\RR^{n})$ with $\|E_{M}\|_{H^{p}(\RR^{n})} \leq (\varepsilon C)^{M} \|f\|_{H^{p}(\RR^{n})}$.
Letting $M \rightarrow \infty$, we get that $\|E_{M}\|_{H^{p}(\RR^{n})} \rightarrow 0$, with
\[
f = \sum_{k=1}^{\infty} \sum_{j=1}^{\infty} \lambda_{j}^{k} \Pi_{l}(g_{j}^{k}, h_{j,1}^{k}, h_{j,2}^{k}).
\]
Moreover, since $\varepsilon C <1$, we have
\begin{align*}
\big( \sum_{k=1}^{\infty} \sum_{j=1}^{\infty} | \lambda_{j}^{k}|^{p} \big)^{1/p} &\leq \big( \sum_{k=1}^{\infty} (\varepsilon^{k-1} C^{k} \|f\|_{H^{p}(\RR^{n})})^{p} \big)^{1/p}\\
&\lesssim \|f\|_{H^{p}(\RR^{n})}.
\end{align*}
This, together with \eqref{bound} give us
\[
\|f\|_{H^{p}(\RR^{n})} \gtrsim \inf\{\sum_{k=1}^{\infty} \sum_{j=1}^{\infty} |\lambda_{j}^{k}|^{p} \|g_{j}^{k}\|_{L^{q}(\RR^{n})} \|h_{j,1}^{k}\|_{L^{r_{1}}(\RR^{n})} \|h_{j,2}^{2}\|_{L^{r_{2}}(\RR^{n})}: f \text{ satisfies } \eqref{factorization3}\}.
\]
On the other hand, by a result of \cite{LMW}, we have that, for any $g \in L^{q}(\RR^{n}), h_{1} \in L^{r_{1}}(\RR^{n}), h_{2} \in L^{r_{2}}(\RR^{n})$,
\[
\| \Pi_{l} (g, h_{1}, h_{2})\|_{H^{p}(\RR^{n})} \lesssim \|g\|_{L^{q}(\RR^{n})} \|h_{1}\|_{L^{r_{1}}(\RR^{n})} \|h_{2}\|_{L^{r_{2}}(\RR^{n})}.
\]
Then, for any $f \in H^{p}(\RR^{n})$ having the representation \eqref{factorization3}, we have that
\begin{align*}
\|f\|_{H^{p}(\RR^{n})} &= \| \sum_{k=1}^{\infty} \sum_{j=1}^{\infty} \lambda_{j}^{k} \Pi_{l} (g_{j}^{k}, h_{j,1}^{k}, h_{j,2}^{k}) \|_{H^{p}(\RR^{n})}^{p}\\
&\leq \sum_{k=1}^{\infty} \sum_{j=1}^{\infty} |\lambda_{j}^{k}|^{p} \| \Pi_{l}(g_{j}^{k}, h_{j,1}^{k}, h_{j,2}^{k})\|_{H^{p}(\RR^{n})}\\
&\lesssim \sum_{k=1}^{\infty} \sum_{j=1}^{\infty} | \lambda_{j}^{k}|^{p} \|g_{j}^{k}\|_{L^{q}(\RR^{n})}^{p} \|h_{j,1}^{k}\|_{L^{r_{1}}(\RR^{n})} \|h_{j,2}^{k}\|_{L^{r_{2}}(\RR^{n})}.
\end{align*}
This implies,
\[
\| f \|_{H^{p}(\RR^{n})}^{p} \lesssim \inf \{ \sum_{k=1}^{\infty} \sum_{j=1}^{\infty} | \lambda_{j}^{k}|^{p} \|g_{j}^{k}\|_{L^{q}(\RR^{n})}^{p} \|h_{j,1}^{k}\|_{L^{r_{1}}(\RR^{n})}^{p} \|h_{j,2}^{k}\|_{L^{r_{2}}(\RR^{n})} : f \text{ satisfies } \eqref{factorization3} \}.
\]
This finishes our proof.
\end{proof}

\begin{proof}[Proof of Theorem \ref{Thm2Bi}]
Let $b \in Lip_{\alpha}(\mathbb{R}^{n})$. A more general case of the first inequality was proved in [\cite{LMW}, Theorem 1.1]. For the second part, suppose $f\in H^{p}(\RR^{n})$ such that $\frac{1}{p} = \frac{1}{r_{1}} + \frac{1}{r_{2}} + \frac{1}{q}$, and that $[b,T]_{l}$ maps $L^{r_{1}}(\RR^{n}) \times L^{r_{2}}(\RR^{n}) \text{ to } L^{q^{\prime}}(\RR^{n})$, where $q'$ is the dual exponent of $q$, then by Theorem \ref{Thm1Bi}, there exist sequences $ \{ \lambda_{j}^{k} \} \subseteq l^{p}, \{ g_{j}^{k}\} \subseteq L^{q}(\RR^{n}), \{h_{j,1}^{k}\} \subseteq L^{r_{1}}(\RR^{n}), \{h_{j,2}^{k}\} \subseteq L^{r_{2}}(\RR^{n})$ such that
\begin{align}\label{factorization2}
f=\sum_{k=1}^{\infty} \sum_{j=1}^{\infty} \lambda_{j}^{k} \Pi_{l}(g_{j}^{k}, h_{j,1}^{k}, h_{j,2}^{k}) \text{ in } H^{p}(\RR^{n}),
\end{align}
and so,
\begin{align*}
\langle b, f \rangle_{L^{2}(\RR^{n})} &= \langle b ,  \sum_{k=1}^{\infty} \sum_{j=1}^{\infty} \lambda_{j}^{k} \Pi_{l}(g_{j}^{k}, h_{j,1}^{k}, h_{j,2}^{k}) \rangle_{L^{2}(\RR^{n})}\\
&= \sum_{k=1}^{\infty} \sum_{j=1}^{\infty} \lambda_{j}^{k} \langle b, \Pi_{l}(g_{j}^{k}, h_{j,1}^{k}, h_{j,2}^{k}) \rangle_{L^{2}(\RR^{n})}\\
&= \sum_{k=1}^{\infty} \sum_{j=1}^{\infty} \lambda_{j}^{k} \langle g_{j}^{k}, [b,T]_{l}(h_{j,1}^{k}, h_{j,2}^{k}) \rangle_{L^{2}(\RR^{n})}\\.
\end{align*}
This implies, by Holder's inequality,
\begin{align*}
|\langle b, f \rangle| &\leq \sum_{k=1}^{\infty} \sum_{j=1}^{\infty} |\lambda_{j}^{k}| \|g_{j}^{k}\|_{L^{q}(\RR^{n})} \|[b,T]_{l}(h_{j,1}^{k}, h_{j,2}^{k})\|_{L^{q'}(\RR^{n})}\\
&\leq \sum_{k=1}^{\infty} \sum_{j=1}^{\infty} |\lambda_{j}^{k}| \|g_{j}^{k}\|_{L^{q}(\RR^{n})} \|[b,T]_{l}\|_{L^{r_{1}}(\RR^{n})\times L^{r_{2}}(\RR^{n}) \rightarrow L^{q'}(\RR^{n})} \|h_{j,1}^{k}\|_{L^{r_{1}}(\RR^{n})} \|h_{j,2}^{k}\|_{L^{r_{2}}(\RR^{n})}\\
&\lesssim \|f\|_{H^{p}(\RR^{n})} \|[b,T]_{l}\|_{L^{r_{1}}(\RR^{n})\times L^{r_{2}}(\RR^{n}) \rightarrow L^{q'}(\RR^{n})}.\\
\end{align*}
By duality between $H^{p}(\RR^{n})$ and $Lip_{\alpha}(\RR^{n})$, we have that,
\begin{align*}
\|b\|_{Lip_{\alpha}(\RR^{n})} &\approx \sup_{\|f\|_{H^{p}(\RR^{n})} \leq 1} |\langle b, f \rangle_{L^{2}(\RR^{n})}|\\
&\lesssim \|[b,T]_{l}\|_{L^{r_{1}}(\RR^{n})\times L^{r_{2}}(\RR^{n}) \rightarrow L^{q'}(\RR^{n})}.\\
\end{align*}
\end{proof}

\bigskip

\vspace{0.3cm}

Marie-Jose S. Kuffner, Department of Mathematics, Washington University -- St. Louis, St. Louis, MO 63130-4899 USA

\smallskip

{\it E-mail}: \texttt{mariejose@wustl.edu}
\end{document}